\newtheorem{theorem}{Theorem}[section]
\newtheorem{corollary}[theorem]{Corollary}
\newtheorem{lemma}[theorem]{Lemma}
\newtheorem{theoremx}{Theorem}
\newtheorem{corollaryx}[theoremx]{Corollary}
\newtheorem*{theoremlift}{Theorem~\ref{thmx:multi-egs}}
\newtheorem*{theoremnucl}{Theorem~\ref{thmx:multi-egs-nucleus}}
\theoremstyle{definition}
\newtheorem{definition}[theorem]{Definition}
\newcommand{\E}{\mathbf{E}}
\newcommand{\N}{\mathcal{N}}
\newcommand{\Z}{\mathbb Z}
\newcommand{\GE}{G_\mathbf{E}}
\newcommand{\Aut}{\mathop{\rm Aut}\nolimits}
\newcommand{\Sym}{\mathop{\rm Sym}\nolimits}
\newcommand{\Stg}{\mathop{\rm St}\nolimits_G}
\newcommand{\be}{\mathrm e}
\DeclareMathOperator{\St}{St}
\title{Liftability and Contracting Property of Multi-EGS Groups}
\author[1]{Arsalan Akram Malik}
\author[1]{Dmytro Savchuk}
\affil[1]{Department of Mathematics and Statistics\\
        University of South Florida\\
        4202 E Fowler Ave\\
        Tampa, FL 33620-5700\\
        \href{mailto:arsalanakram@usf.edu}{arsalanakram@usf.edu}, \href{mailto:savchuk@usf.edu}{savchuk@usf.edu}}
\begin{document}

\maketitle

\begin{abstract}
We provide sufficient conditions for the multi-EGS groups to be liftable and thus produce new examples of groups acting transitively on regular trees of finite degree stabilizing one of the ends, whose closures are scale groups as defined by Willis. Additionally, we explicitly compute the contracting nuclei of the groups in this class. We also specialize our results to the classes of multi-edge spinal group and EGS-groups.
\end{abstract}


\section{Introduction}
\label{sec:intro}
The classes of groups studied in this paper have their origins in the pioneering works of Grigorchuk~\cite{grigorchuk1980burnside}, Gupta and Sidki~\cite{gupta_s:burnside}. The \emph{GGS-groups} were introduced and studied from the early 1990s with the term initially introduced by Baumslag in~\cite{baumslag:topics93}.  These groups often share common unusual properties with Grigorchuk and Gupta-Sidki groups, such as being infinite finitely generated periodic groups, just infinite, or branch~\cite{vovkivsky:infinite_torsion_groups00,bar_gs:branch}. Several further extensions of the class of GGS-groups was introduced afterwards. First, the construction was extended by Pervova~\cite{pervova:profinite07} in 2007, who coined the term \emph{extended Gupta-Sidki groups} (\emph{EGS-groups}) and constructed the first examples of groups acting on rooted trees, whose closures in the corresponding tree automorphism groups do not coincide with their profinite completions. Each EGS-group contains a GGS-group as a subgroup and can be uniquely identified by it. Another generalization of GGS-groups, the class of \textit{multi-edge spinal groups}, was proposed by Alexoudas, Klopsch, and Thillaisundaram in 2016~\cite{alexoudas2016maximal}. Finally, the latter groups were generalized by Klopsch and Thillaisundaram~\cite{klopsch2018maximal} to \emph{generalized multi-edge spinal groups}, which were renamed to shorter \emph{multi-EGS groups} in~\cite{thillaisundaram2021profinite}. It was shown in~\cite{klopsch2018maximal} that under certain common conditions these groups are regular branch, just-infinite, and do not have a congruence subgroup property. Some recent papers have studied the sizes of congruence quotients in GGS-groups~\cite{fernandez2014ggs} and the exponents of these quotients in multi-EGS groups~\cite{maini2024multi}. We review the explicit connection between all these classes of groups in Section~\ref{sec:liftable}.

The class of \textit{liftable self-similar groups} was introduced by Grigorchuk and the second author in~\cite{grigorchuk2023liftable} in connection with groups of isometries of local fields. The groups in this class admit natural ascending HNN extensions with respect to their \textit{lifting endomorphisms} that act transitively on $(d+1)$-regular (unrooted) trees $\widetilde T_{d+1}$ (obtained as the union of a countable family of rooted $d$-ary trees $T_d$) preserving one of the ends and embed into the groups of isometries of local fields. Moreover, their closures in the automorphism groups of $\widetilde T_{d+1}$ preserving a fixed end under mild conditions are scale groups as defined by Willis in~\cite{willis2020scale}. The class of scale groups plays an important role in the theory of totally disconnected locally compact (TDLC) groups. It is shown in~\cite{grigorchuk2023liftable} that many well-known self-similar groups are liftable, including the Grigorchuk group, the Basilica group, and the lamplighter group. It was also shown that GGS-groups are also liftable as long as the defining vector is nonsymmetric in the sense that it has a zero component, such that the symmetric component is non-zero. 

The main purpose of this paper is to provide sufficient conditions for the multi-EGS groups to be liftable, thus providing new examples of liftable groups and at the same time suggesting new applications of multi-EGS groups. Each multi-EGS group acting on a $p$-ary rooted tree (for an odd prime $p$) is defined by a collection of $p$ subspaces (not all empty) of the vector space $(\Z/p\Z)^{p-1}$ that are defined by their bases $\E^{(l)}=\left\{\mathbf{e}_i^{(l)} = (e_{i,1}^{(l)}, \ldots,e_{i,p-1}^{(l)}),1\leq i\leq r_l\right\}$, $1\leq l\leq p$ where $r_l\geq 0$, $1\leq l\leq p$ represent the dimensions of the corresponding subspaces. The collection 
\begin{equation}
\label{eqn:datum}
\E = \left(\E^{(1)},\ldots, \E^{(p)}\right)
\end{equation} 
is called the \emph{datum} that defines a multi-EGS group $G_{\mathbf E}$. The explicit definition is given in Subsection~\ref{subsec:multi-egs}. The core of this paper is the following theorem.

\begin{theoremx}
\label{thmx:multi-egs}
For an odd prime $p$, let $\GE$ be a multi-EGS group defined by datum $\E$ given by~\eqref{eqn:datum} such that for some $m \in \{1,\ldots,p\}$, there exist $k \in \{1,\ldots,r_m\}$  and $j\in\{1,\ldots,p-1\}$ that satisfy:
    \begin{itemize}
        \item $e_{k,j}^{(m)} \neq 0$,
        \item $e_{i,p-j}^{(l)} = 0$ for all $i \in \{1,\ldots,r_l\}$ and $l \in \{1,\ldots,p\}$.
    \end{itemize}
    Then $\GE$ is liftable with a lifting $\sigma$ given by:
\begin{equation}
\label{eqn:sigma_multi-egs}
\sigma\colon\left\{
\begin{array}{llll}
a&\mapsto& \left(\bigl(b_k^{(m)}\bigr)^{a^{1-j}}\right)^f,&\text{where $f=\left(e_{k,j}^{(m)}\right)^{-1}\in\Z/p\Z$},\\
b_i^{(l)}&\mapsto& \bigl(b_i^{(l)}\bigr)^{a^{p-l+1}} \ ,& i \in \{1,\ldots,r_l\}, l \in \{1,\ldots,p\}.\\
\end{array}\right.
\end{equation}
\end{theoremx}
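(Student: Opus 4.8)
The plan is to verify directly the three requirements in the definition of a liftable self-similar group: that $\sigma$ is a well-defined endomorphism of $\GE$, that it has the lifting property $\sigma(g)|_v = g$ at a fixed first-level vertex $v$, and that it is injective. First I would record the wreath recursions of the generators: $a$ realizes the $p$-cycle on the first level, while each $b_i^{(l)}$ fixes the first level, carries its self-reference $b_i^{(l)}$ at the vertex indexed by $l$, and carries powers $a^{e_{i,s}^{(l)}}$ at the remaining vertices. Since conjugation by $a$ cyclically shifts sections, the conjugation by $a^{p-l+1}$ in the definition of $\sigma$ is engineered precisely to move the self-reference of $b_i^{(l)}$ to one fixed vertex $v$ (the same for every $l$); likewise the conjugation by $a^{1-j}$ moves the section $a^{e_{k,j}^{(m)}}$ of $b_k^{(m)}$ to $v$, and raising to the power $f=(e_{k,j}^{(m)})^{-1}$ — which exists exactly because $e_{k,j}^{(m)}\neq 0$ — turns it into $a$.

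For the lifting property, I would first note that every generator-image, being a power of a conjugate by $a$ of a first-level-fixing element, lies in $\St_{\GE}(1)$; hence so does $\sigma(g)$ for every $g$. On $\St_{\GE}(1)$ each section map $\rho_t\colon h\mapsto h|_t$ is a homomorphism into $\GE$, so it suffices to read off $\sigma(a)|_v=a$ and $\sigma(b_i^{(l)})|_v=b_i^{(l)}$ on generators, from which $\sigma(g)|_v=g$ follows for all $g$ by multiplicativity. Injectivity is then immediate: $\sigma(g)=1$ forces $g=\sigma(g)|_v=1$.

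The main obstacle is well-definedness, i.e. that the assignment on generators respects every relation holding in $\GE$; since $\GE$ is given as a subgroup of $\Aut(T_p)$ rather than by a presentation, I would argue via the section maps. Let $w$ be any word that is trivial in $\GE$. As all generator-images lie in $\St_{\GE}(1)$, so does the element $w(\sigma(a),\sigma(b_i^{(l)}))$, and it vanishes iff all of its first-level sections do; by the homomorphism property of $\rho_t$ these sections equal $w(\sigma(a)|_t,\sigma(b_i^{(l)})|_t)$. At $t=v$ this is $w(a,b_i^{(l)})=1$. At every other vertex the sections $\sigma(b_i^{(l)})|_t$ are powers of $a$, and $\sigma(a)|_t$ is a power of $a$ except at the single vertex $u\neq v$ at which the self-reference of $b_k^{(m)}$ is located after the conjugation, where $\sigma(a)|_u=(b_k^{(m)})^f$. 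The hypothesis $e_{i,p-j}^{(l)}=0$ for all $i,l$ is exactly what guarantees $\sigma(b_i^{(l)})|_u=1$ for every $i,l$: the substitution at $u$ then degenerates to $a\mapsto(b_k^{(m)})^f$ and $b_i^{(l)}\mapsto 1$, so $w$ collapses to $((b_k^{(m)})^f)^{E_a(w)}$, which is trivial because the $a$-exponent sum $E_a(w)$ vanishes modulo $p$ (it is the image of $w$ under the root-permutation homomorphism $\GE\to\Z/p\Z$) and $b_k^{(m)}$ has order $p$. At the remaining vertices both arguments lie in $\langle a\rangle$, so $w$ evaluates to a power of $a$ whose exponent is a $\Z/p\Z$-linear combination of the exponent sums of $w$ in the various generators; these all vanish because the exponent-sum maps $\GE\to\Z/p\Z$ attached to each generator are homomorphisms, a consequence of the standard description of $\GE^{\mathrm{ab}}$. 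Hence all sections vanish and $w(\sigma(a),\sigma(b_i^{(l)}))=1$.

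I expect the crux to be precisely this anomalous vertex $u$: it is the only place where the image of $a$ acquires a directed-generator section, and were the symmetric components $e_{i,p-j}^{(l)}$ not all zero, the section map at $u$ would attempt to send some $b_i^{(l)}$ to a nontrivial power of $a$ while sending $a$ to a power of $b_k^{(m)}$ — a role-reversal incompatible with the relations of $\GE$. Thus the two hypotheses play complementary roles: $e_{k,j}^{(m)}\neq0$ makes $f$ invertible so that $\sigma(a)|_v$ is exactly $a$, while $e_{i,p-j}^{(l)}=0$ for all $i,l$ kills the obstruction to well-definedness at $u$. Collecting the three verified properties then establishes that $\GE$ is liftable with lifting endomorphism $\sigma$.
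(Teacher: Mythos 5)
Your proposal follows the paper's own proof essentially step for step: the same wreath\-/recursion bookkeeping for the generator images, the same lifting vertex (position $0$, where every $\sigma\bigl(b_i^{(l)}\bigr)$ carries its self-reference), the same single anomalous vertex where $\sigma(a)$ carries $\bigl(b_k^{(m)}\bigr)^f$ while all $\sigma\bigl(b_i^{(l)}\bigr)$ carry trivial sections, the same verification of well-definedness by evaluating the first-level sections of an arbitrary relator and killing them with exponent-sum arguments resting on $\GE/\GE'\cong(\Z/p\Z)^{1+r_1+\cdots+r_p}$ (the paper cites Proposition~3.9 of~\cite{klopsch2018maximal} exactly where you invoke ``the standard description of $\GE^{\mathrm{ab}}$''), and the same observation that the lifting property and injectivity follow by reading sections at the lifting vertex. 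Your use of the root-permutation homomorphism for the $a$-exponent sum is a pleasant self-contained variant, but cosmetic.

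However, the one claim you assert without computing is false whenever $m\neq p$, and it is exactly the claim on which everything rests; the paper's proof contains the same error in its second displayed wreath recursion, so you have faithfully reproduced a flaw of the source rather than introduced one --- but as a standalone argument this is a genuine gap. By Definition~\ref{def:multi-egs} the self-reference of $b_i^{(l)}$ sits at position $l-1$, and under the convention~\eqref{eqn:gh} conjugation by $a$ shifts sections forward by one position (this is forced: it is what makes the claim that $\bigl(b_i^{(l)}\bigr)^{a^{p-l+1}}$ has its self-reference at position $0$ correct for every $l$). Consequently, in $\bigl(b_k^{(m)}\bigr)^{a^{1-j}}$ the section $a^{e_{k,j}^{(m)}}$ lands at position $m \bmod p$, not at $0$, and the self-reference lands at position $m-j$, not $p-j$; thus $\pi_0(\sigma(a))$ equals $a^{f\,e_{k,(j-m)\bmod p}^{(m)}}$ when $j\neq m$, and $\bigl(b_k^{(m)}\bigr)^f$ when $j=m$, neither of which is $a$ in general. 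Concretely, take $p=3$, $r_1=1$, $r_2=r_3=0$, $\mathbf{e}_1^{(1)}=(1,0)$, $m=k=j=1$, $f=1$: the hypotheses hold, $b_1^{(1)}=\bigl(b_1^{(1)},a,1\bigr)$, and formula~\eqref{eqn:sigma_multi-egs} gives $\sigma(a)=\bigl(b_1^{(1)}\bigr)^{a^{0}}=b_1^{(1)}$ and $\sigma\bigl(b_1^{(1)}\bigr)=\bigl(b_1^{(1)}\bigr)^{a^{3}}=b_1^{(1)}$, so $\sigma(a)=\sigma\bigl(b_1^{(1)}\bigr)$ and $\sigma$ cannot be injective, let alone a right inverse of any $\pi_i$. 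Everything you wrote becomes correct after replacing the conjugator $a^{1-j}$ by $a^{1-j-m}$ (then the self-reference of $\sigma(a)$ does land at $p-j$ and $\pi_0(\sigma(a))=a$); this reduces to $a^{1-j}$ precisely when $m=p$, which is why Corollaries~\ref{cor:multi-edge} and~\ref{cor:egs} are unaffected. So your architecture is sound and matches the paper's, but a complete proof must actually compute where the sections of $\sigma(a)$ land --- and with the formula as stated, that computation refutes the claim unless $m=p$.
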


To better visualize the conditions of the above theorem, we can represent the collections $\E^{(l)}$ by $r_l\times (p-1)$ matrices $M^{(l)}$ whose rows are elements of $\E^{(l)}$. Then the conditions of the theorem can be restated as follows: there is a column index $j$ such that the $j$-th column in each $M^{(l)}$ is zero, and there is at least one nonzero entry in the column $(p-j)$ in at least one of $M^{(l)}$.

As an immediate corollary of Theorem~\ref{thmx:multi-egs} and Theorem~B from~\cite{grigorchuk2023liftable} we obtain embedding results for ascending HNN extensions of multi-EGS groups.

\begin{corollaryx}
\label{colx:hnn}
Let $G_{\mathbf{E}}$ be a multi-EGS group acting on a $p$-ary rooted tree $T_p$ and defined by the datum $\mathbf{E}$ given by~\eqref{eqn:datum} that satisfies conditions of Theorem~\ref{thmx:multi-egs}, with the lifting given by~\eqref{eqn:sigma_multi-egs}. Then
\begin{itemize}
\item[(i)] There is an embedding $\theta$ of the ascending HNN extension $\widetilde G_{\mathbf{E}}$ of $G_{\mathbf{E}}$ by $\sigma$
\begin{equation}
\label{eqn:hnn_pres}
\widetilde G_{\mathbf{E}}=G_{\mathbf{E}}*_\sigma=\langle G_{\mathbf{E}},t\mid \text{relations in}\ G_{\mathbf{E}}, tgt^{-1}=\sigma(g)\ \text{for all}\ g\in G_{\mathbf{E}}\rangle
\end{equation}
into the group of automorphisms of $\widetilde T_{p+1}$ preserving one of its ends $\omega$.
\item[(ii)] $G_{\mathbf{E}}$ is self-replicating, $\theta(\widetilde G)$ acts transitively on the set of vertices of $\widetilde T_{p+1}$, and the closure of $\theta(\widetilde G)$ in the group of automorphisms of $\widetilde T_{p+1}$ that preserve $\omega$ is a scale group.
\end{itemize}
\end{corollaryx}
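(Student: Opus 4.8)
The plan is to derive Corollary~\ref{colx:hnn} directly from Theorem~\ref{thmx:multi-egs} by feeding the conclusion ``$G_{\mathbf{E}}$ is liftable with lifting $\sigma$'' into Theorem~B of~\cite{grigorchuk2023liftable}, which is the abstract machinery that turns a liftable self-similar group into an end-preserving action on the biregular tree. For part~(i), once we know $\sigma$ is a lifting endomorphism in the sense of~\cite{grigorchuk2023liftable}, the ascending HNN extension $\widetilde G_{\mathbf{E}} = G_{\mathbf{E}} *_\sigma$ is well defined by the presentation~\eqref{eqn:hnn_pres}, and Theorem~B supplies the embedding $\theta$ into the end-stabilizer $\Aut(\widetilde T_{p+1})_\omega$. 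So the only thing I really need to check before invoking that theorem is that the hypotheses of Theorem~B are met; the embedding itself then comes for free.

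First I would recall precisely what Theorem~B of~\cite{grigorchuk2023liftable} requires as input. Typically these liftability-to-scale-group results need (a) the group to be self-similar and liftable (given to us by Theorem~\ref{thmx:multi-egs}), and (b) for the closure statement, some transitivity/self-replicating condition ensuring the action on $\widetilde T_{p+1}$ is vertex-transitive and that the closure is a genuine scale group. So the real content of the proof is verifying hypothesis~(b): I would show that $G_{\mathbf{E}}$ is \emph{self-replicating} (fractal in the sense that every first-level section projection is surjective onto $G_{\mathbf{E}}$). For multi-EGS groups this is a standard computation: the generator $a$ cyclically permutes the $p$ subtrees, and the directed automorphisms $b_i^{(l)}$ together with their $a$-conjugates generate sections that recover all of $a$ and all the $b_i^{(l)}$ on each subtree. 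I expect this to follow from the fact that the datum is not entirely empty, so at least one nontrivial directed generator exists and, combined with $a$, produces the full generating set in the sections.

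For part~(ii), with self-replication established I would verify that $\theta(\widetilde G_{\mathbf{E}})$ acts transitively on the vertices of $\widetilde T_{p+1}$: the stable letter $t$ moves between the levels of the biregular tree (realizing the $\sigma$-lift), while transitivity within each level comes from $a$ acting transitively on the $p$ children of each vertex, and self-replication propagates this transitivity down the rooted subtrees. Combining horizontal transitivity (from $a$ and self-replication) with the vertical action of $t$ yields full vertex-transitivity on $\widetilde T_{p+1}$. The scale-group conclusion then follows directly from the corresponding clause of Theorem~B, which states that under self-replication and end-preserving vertex-transitivity the closure of the image in $\Aut(\widetilde T_{p+1})_\omega$ is a scale group in the sense of Willis~\cite{willis2020scale}.

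The main obstacle I anticipate is the self-replicating (fractal) verification for an arbitrary datum $\mathbf{E}$ satisfying the theorem's column conditions, since the generating set is large and the directed generators $b_i^{(l)}$ live on different subtrees indexed by $l$. The explicit form of $\sigma$ in~\eqref{eqn:sigma_multi-egs}, which sends $a$ to a power of a conjugate of some $b_k^{(m)}$ and sends each $b_i^{(l)}$ to an $a$-conjugate of itself, should help here: it tells me exactly which sections to track, and the condition $e_{k,j}^{(m)} \neq 0$ guarantees the nontrivial directed generator needed to recover $a$ in a section. Beyond this, the corollary is essentially a clean application of the cited theorem, so I would keep the write-up short: state the hypotheses of Theorem~B, note that Theorem~\ref{thmx:multi-egs} supplies liftability, dispatch self-replication with the section computation, and then quote Theorem~B for the embedding, the transitivity, and the scale-group closure.
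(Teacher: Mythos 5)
Your proposal matches the paper's treatment: the corollary is stated there as an immediate consequence of Theorem~\ref{thmx:multi-egs} together with Theorem~B of~\cite{grigorchuk2023liftable}, which is exactly the application you describe for both the embedding in part~(i) and the transitivity and scale-group conclusions in part~(ii). The only difference is that the self-replication verification you anticipate as the main obstacle requires no section computation at all: as noted in Subsection~\ref{subsec:liftable}, liftability already forces the projection $\pi_i$ to be surjective on $\Stg(i)$, so self-replication follows directly from the definition of a lifting together with the transitivity of $a$ on the first level.
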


In Section~\ref{sec:liftable} we also state Corollaries~\ref{cor:multi-edge} and~\ref{cor:egs} in which, for the sake of notation simplicity, we specialize a more general Theorem~\ref{thmx:multi-egs} to the classes of EGS-groups and multi-edge spinal groups.

We finish the paper by showing that the multi-EGS groups are contracting in Section~\ref{sec:contracting} and explicitly computing their contracting nuclei. This property was first introduced by Nekrashevych (see, for example,~\cite{nekrash:self-similar}) in connection with the holomorphic dynamics via the theory of iterated monodromy groups, but its origins trace back to the original construction of the Grigorchuk group~\cite{grigorch:burnside}. The class of contracting groups has also nice algebraic and algorithmic properties, and includes most interesting examples of groups acting on rooted trees, such as Grigorchuk group, Gupta-Sidki group, Basilica group, and many others. All groups in this class admit a polynomial-time algorithm solving the word problem with the polynomial degree depending on the size of the nucleus~\cite{savchuk:wp}. This algorithm, along with many computational routines for both contracting and self-similar groups, are implemented in two GAP packages~\cite{muntyan_s:automgrp,bartholdi:fr}.

Furthermore, contracting groups can be used as a platform for group-based post-quantum cryptography protocols (such as the Anshel-Anshel-Goldfeld protocol~\cite{anshel2003non}) while using \emph{nucleus portraits} to represent their elements uniquely and efficiently. The detailed description of this approach is laid out in~\cite{kahrobaei2024contracting}. Myasnikov and Ushakov have suggested in~\cite{myasnikov2008random} that a well-known class of heuristic attacks on group-based cryptosystems, which are broadly referred to as length-based attacks~\cite{hughes2003length}, generically works well against groups that have many free subgroups.  By the result of Nekrashevych~\cite{nekrashevych:contracting_no_free} contracting groups have no free nonabelian subgroups, so they constitute a natural class where such attacks may not be effective. In~\cite{kahrobaei2024contracting} Kahrobaei and the authors investigated the effectiveness of the length-based attack variants against the simultaneous conjugacy search problem in contracting groups and discovered that the efficiency of the attack generally decreases with increasing group nucleus size. In Section~\ref{sec:contracting} we prove the following proposition:

\begin{theoremx}
\label{thmx:multi-egs-nucleus}
    For an odd prime $p$, the multi-EGS group $\GE$ defined by the numerical datum $\E$ given by~\eqref{eqn:datum} is a contracting group with nucleus 
    \[\N_\mathbf{E} = \langle a \rangle \cup  \bigcup_{l=1}^p\langle b_i^{(l)}\colon 1 \leq i \leq r_l \rangle\] and $ |\N_\mathbf{E}| = p^{r_1} + \dots + p^{r_p}$.
\end{theoremx}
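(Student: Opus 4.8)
The plan is to establish contraction directly from the self-similar wreath recursion for $\GE$. First I would recall the explicit definition of the generators: the rooted automorphism $a$ acts on the first level as the $p$-cycle $(1\,2\,\cdots\,p)$ with trivial sections, while each directed generator $b_i^{(l)}$ has a section decomposition in which exactly one first-level section is again a generator of the same type (the ``spinal'' recursion $b_i^{(l)} = (a^{e_{i,1}^{(l)}}, \ldots, a^{e_{i,p-1}^{(l)}}, b_i^{(l)})$ up to the appropriate choice of which coordinate carries the recursive section, determined by $l$), and all other sections are powers of $a$. The key structural fact I want to exploit is that every generator of $\GE$ has sections lying in the proposed set $\N_\mathbf{E}$, and more importantly the sections of a generator are \emph{shorter} in the sense that they are either powers of $a$ or a single directed generator.

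The main step is to prove that $\N_\mathbf{E}$ is self-similar-closed under taking sections at level one in the following strong sense: for every $g \in \N_\mathbf{E}$ and every first-level vertex $v$, the section $g|_v$ again lies in $\N_\mathbf{E}$, and for any word $w$ in the generators the sections $w|_v$ eventually fall into $\N_\mathbf{E}$ once the word length is controlled. Concretely, I would show that for a reduced word $w$ of length $n \geq 2$ in the generating set, each first-level section $w|_v$ has length at most $\lceil n/p \rceil$ or at most $(n+ c)/2$ for a suitable constant, which gives the contraction inequality $|w|_v| \le \tfrac{1}{2}|w| + C$; iterating yields a finite nucleus. Because $a$ has trivial sections and each $b_i^{(l)}$ contributes at most one ``heavy'' section (the recursive one) with all remaining sections being powers of $a$ of bounded length, the total length of the sections of a product telescopes, and the standard computation shows the sections of any sufficiently long word are already contained in $\N_\mathbf{E}$.

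Having established that sections of long words contract into $\N_\mathbf{E}$, I would verify that $\N_\mathbf{E}$ is itself state-closed, i.e.\ taking sections of elements of $\N_\mathbf{E}$ never leaves $\N_\mathbf{E}$: the sections of $a$ are trivial, and the sections of a power $(b_i^{(l)})^s$ are powers of $a$ together with a single power $(b_i^{(l)})^s$, all of which lie in $\langle a\rangle \cup \langle b_i^{(l)} : 1\le i\le r_l\rangle$. This makes $\N_\mathbf{E}$ a genuine nucleus rather than merely a contracting set. Minimality would follow by observing that each element of $\N_\mathbf{E}$ actually appears as a section of arbitrarily long elements (so it cannot be discarded), which is immediate from the recursive occurrence of $b_i^{(l)}$ in its own section tuple. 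Finally, the cardinality count $|\N_\mathbf{E}| = p^{r_1} + \cdots + p^{r_p}$ comes from noting that $\langle a\rangle$ has order $p$ and, for each $l$, the abelian group $\langle b_i^{(l)} : 1\le i\le r_l\rangle$ is elementary abelian of rank $r_l$ hence of order $p^{r_l}$; accounting for the shared identity across these cyclic/elementary-abelian pieces gives the stated sum (the identity is counted once, and the nonidentity elements partition accordingly).

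The main obstacle I anticipate is the bookkeeping in the contraction inequality: unlike the classical GGS case where a single directed generator is involved, here a word may mix directed generators $b_i^{(l)}$ coming from \emph{different} subspaces $l$, and the recursive section of $b_i^{(l)}$ lands in a \emph{specific} coordinate depending on $l$. I therefore expect the delicate part to be tracking, for a product $w = g_1 g_2 \cdots g_n$, how the recursive sections at distinct coordinates combine when the first-level permutations (powers of $a$) shift the vertices; the argument must ensure that at most a controlled fraction of the letters contribute a recursive (length-preserving) section to any single vertex $v$, while the rest contribute bounded-length powers of $a$. Verifying this uniformly over all $v$ and all reduced words, and confirming that no cancellation inflates section length, is the technical heart of the proof.
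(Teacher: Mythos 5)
Your overall plan (direct length-contraction over all words, then minimality from self-recurrence) is not the paper's route, and its central estimate has a genuine gap: the first-level inequality $|w|_v|\le\tfrac12|w|+C$ that you propose to prove is in fact \emph{false} for multi-EGS groups. Take two directed generators from different families, say $b_1^{(l)}$ and $b_1^{(l')}$ with $l\neq l'$, whose recursive sections sit at different first-level coordinates, and consider $w=\bigl(b_1^{(l)}b_1^{(l')}\bigr)^m$. Both letters stabilize the first level, and at the coordinate where $b_1^{(l)}$ recurses the section is $\bigl(b_1^{(l)}a^{e}\bigr)^m$ for some power $a^{e}$ of $a$, which generically is nontrivial; this section has the same syllable length $2m$ as $w$. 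So no halving (indeed no shortening at all) occurs at level one, and contraction only sets in at level two. You flag exactly this mixing phenomenon in your last paragraph as ``the technical heart of the proof,'' but you never resolve it, so the proposal is a plan whose decisive step is both missing and, as concretely stated, incorrect. A second, smaller gap: your minimality argument covers the $b_i^{(l)}$'s (they are sections of themselves), but not the powers of $a$, which are \emph{not} sections of themselves; one must observe that every power of $a$ occurs as a first-level section of some element of $\bigl\langle b_1^{(l)}\bigr\rangle$ (using that $p$ is prime), and those elements are self-recurrent.

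The paper avoids all of this bookkeeping by invoking Nekrashevych's criterion (Lemma~\ref{lem:nek}): since the proposed nucleus $\N_\mathbf{E}=\langle a\rangle\cup\bigcup_l B_l$ is itself a symmetric generating set containing the identity, it suffices to check that for every \emph{pair} $n_1,n_2\in\N_\mathbf{E}$ the sections of $n_1n_2$ at level $k=2$ lie in $\N_\mathbf{E}$. This is a finite four-case analysis (both powers of $a$; both in the same $B_l$; one of each; one in $B_l$ and one in $B_{l'}$ with $l\neq l'$), and the troublesome mixed case IV is handled precisely by allowing descent to the \emph{second} level, where each section reduces to the easy case of (power of $a$)$\cdot$(element of some $B_l$). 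If you want to salvage your word-length approach, you would need to prove a two-level contraction estimate rather than a one-level one; but the pair-checking criterion with $k=2$ is what makes the argument short.
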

This result shows that it is easy to construct multi-EGS groups with large nuclei, and thus suggests that multi-EGS groups, used as platforms in cryptographic protocols, have potential to withstand against length-based-type attacks. The contracting property of the multi-EGS groups also follows from the fact that they are generated by bounded automata in the sense of Sidki~\cite{sidki:nofree}, which are proved to be contracting by Bondarenko and Nekrashevych in~\cite{bondarenko_n:post-critically_finite_self-similar_groups03}. But for some applications as shown above nucleus size plays an important role.

\noindent \textbf{Acknowledgements.} The authors greatly acknowledge the support of American Institute of Mathematics (AIM). Part of this work was done in June 2024 during the workshop ``Groups of dynamical origin'' funded by AIM.


\section{Preliminaries and notation}
\label{sec:prelim}

We recall some definitions and establish notation that will be used in rest of this paper. 

We use $X^*$ to denote the free monoid on the set $X=\{0,1,\ldots ,d-1\}$ and recall that $X^*$ can be endowed with the structure of a rooted $d$-ary tree $T(X)\cong T_d$ by defining $v$ to be adjacent to $vx$ for every $v\in X^*$ and $x\in X$. Finite words over $X$ naturally label vertices of $T(X)$. Given $n \in \mathbb{N}$, the set $X^n$ of words of length $n$ over $X$ corresponds to the $n$-th level of $T(X)$. Each element of the group $\Aut(T(X))$ of automorphisms of the tree preserves its root and adjacency of vertices.

\begin{definition}
\label{def:section}
    Given $g \in \Aut(T(X))$ and $x \in X$, for each $v \in X^*$ there is $v' \in X^*$ such that
    \[g(xv)=g(x)v'.\] 
    The map $g|_x\colon X^*\to X^*$ given by \[g|_x(v)=v'\] is an automorphism of $T(X)$ called the \emph{section} of $g$ at $x$. For a word $v=x_1x_2\ldots x_n\in X^*$ we define the section $g|_v$ of $g$ at $v$ as
    \[g|_v=g_{x_1}|_{x_2}|\ldots|_{x_n}.\]
\end{definition}

\begin{definition}
\label{def:self-similar}
A subgroup of $\Aut(T(X))$ is called \emph{self-similar} if it is closed under taking sections at the vertices of $T(X)$.
\end{definition}

The aforementioned definitions allow us to use the language of wreath recursions as for each self-similar group $G$ there is a natural embedding
\[G\hookrightarrow G \wr \Sym(X),\]
where $\wr$ denotes the permutational wreath product, and $\Sym(X)$ denotes the symmetric group on $X$. This embedding is given by
\begin{equation}
\label{eq:wreath}
G\ni g\mapsto (g|_0,g|_1,\ldots,g|_{d-1})\sigma_g\in G\wr \Sym(X),
\end{equation}
where $\sigma_g$ is the permutation of $X$ induced by the action of $g$ on first level of the tree. The decomposition at the first level of all generators a self-similar subgroup $G$ of $\Aut(T(X))$ under the embedding~\eqref{eq:wreath} is called the \emph{wreath recursion defining the group}. With a slight abuse of notation it is standard to identify an element of $G$ with its image in $G\wr\Sym(X)$ and write $g=(g|_0,g|_1,\ldots,g|_{d-1})\sigma_g$. 

Wreath recursion is particularly convenient for computing the sections of group elements. Indeed, the products and inverses of automorphisms can be found as follows. If $g=(g_0,g_1,\ldots,g_{d-1})\sigma_g$ and $h=(h_0,h_1,\ldots,h_{d-1})\sigma_h$ are two elements of $\Aut(T(X))$, then \begin{equation}
\label{eqn:gh}
gh=\left(g_0h_{\sigma_g(0)},g_1h_{\sigma_g(1)},\ldots,g_{d-1}h_{\sigma_g(d-1)}\right)\sigma_g\sigma_h    
\end{equation} 
and the wreath recursion of $g^{-1}$ is
\begin{equation}
\label{eqn:ginv}
g^{-1}=\left(g^{-1}_{\sigma_g^{-1}(0)},g^{-1}_{\sigma_g^{-1}(1)},\ldots,g^{-1}_{\sigma_g^{-1}(d-1)}\right)\sigma_g^{-1}.
\end{equation}

\vspace{5mm}


\subsection{Liftable groups}
\label{subsec:liftable}

We recall that for a group $G$ acting on $T(X)$ and $v\in X^*$ the stabilizer $\Stg(v)$ of vertex $v$ is the subgroup of $G$ consisting of all elements that fix $v$. For each $n\geq 1$ the stabilizer $\Stg(n)$ of level $n$ in $G$ is the subgroup of $G$ consisting of all elements that fix all vertices of level $n$. Stabilizers of levels are normal finite index subgroups of $G$ such that
\[\bigcap_{n\geq1}\Stg(X^n)=\{1\}.\]

For each $v\in X^*$ the map
\begin{equation}
\label{eqn:projection}
\begin{array}{llll}\pi_v\colon &\St_{\Aut(T(X))}(v)&\to&\Aut(T(X))\\
&g&\mapsto&g|_v
\end{array}
\end{equation}
defines a homomorphism that we will call \emph{projection}. For each subgroup $G$ of $\Aut(T(X))$ the homomorphism $\pi_v$ restricts to a homomorphism $\Stg(v)\to\Aut(T(X))$. Moreover, if $G$ is a self-similar group, then, since $G$ is closed under taking the sections, $\pi_v$ is a homomorphism from $\Stg(v)$ to $G$. In the case $X=\{0,1,\ldots,d-1\}$ the corresponding projections are denoted by $\pi_i$, $i=0,1,\ldots,d-1$.

\begin{definition}
\label{def:liftable}

A self-similar group $G$ acting on a tree $T_d=T(X)$ is called \emph{liftable} if there exist some $i\in X$ and a homomorphism $\sigma\colon G\to \Stg(i)$, called the \emph{lifting}, that is the right inverse of the projection map $\pi_i$ defined in equation~\eqref{eqn:projection}, that is, such that $\pi_i\circ\sigma$ is the identity on $G$.
\end{definition}

The idea of the lifting is related to the construction of finitely $L$-presented groups acting on rooted trees (e.g., Grigorchuk group), where the lifting endomorphism corresponds to the substitution used in the $L$-presentation of the group~\cite{grigorchuk2023liftable}.

From the definition of a liftable group it follows that the projection map $\pi_i\colon\Stg(i)\to G$ must be surjective, i.e. liftable groups are self-replicating as long as they act transitively on $X$. 


\section{Liftable extensions of Multi-EGS groups}
\label{sec:liftable}

\subsection{Multi-EGS groups}
\label{subsec:multi-egs}
In the rest of the paper for an odd prime $p$ we consider the alphabet $X=\{0,1,\ldots,p-1\}$. The multi-EGS groups are defined as follows.

\begin{definition}
\label{def:multi-egs}
    For an odd prime $p$ and $l \in \{1,\ldots,p\}$, let $\E^{(l)}$ denote a collection of $r_l\geq 0$ linearly independent vectors in $(\mathbb{Z}/p\mathbb{Z})^{(p-1)}$ with at least one $r_l>0$. We denote the vectors in this collection by $\mathbf{e}_i^{(l)} = (e_{i,1}^{(l)}, \ldots,e_{i,p-1}^{(l)})$ where $i \in \{1,\ldots,r_l\}$. Then a multi-EGS group $\GE$ is defined by the datum $\E = \left(\E^{(1)},\ldots, \E^{(p)}\right)$ to be the self-similar group acting on $X^*$ generated by the set $\{a\}\cup\{b_i^{(l)} \mid 1 \leq i \leq r_l, 1 \leq l \leq p\}$ with the following wreath recursions:
\[
    \begin{array}{lcl}
a&=&(1,1,\ldots,1,1)\varepsilon,\\
b_i^{(l)}&=&(a^{e_{i,p-l+1}^{(l)}},\ldots,a^{e_{i,p-1}^{(l)}}, b_i^{(l)},a^{e_{i,1}^{(l)}},\ldots,a^{e_{i,p-l}^{(l)}}), \ 1\leq i\leq r_l , \ 1 \leq l \leq p, 
\end{array}
\]
where $\varepsilon=(0,1,\ldots,p-1)$ denotes a long cycle in $\Sym(X)$.
\end{definition}


It follows immediately from the wreath recursion that all generators have order $p$. Also, linear independence of vectors in ${\mathbf{E}}^{(l)}$ implies that $B_l=\langle b_1^{(l)}, \ldots, b_{r_l}^{(l)} \rangle \cong (\Z/p\Z)^{r_l}$ is an elementary abelian $p$-group of order $p^{r_l}$. We will also use the convention that if $r_l=0$ then $\mathbf{E}^{(l)}$ is empty and $B_l$ is trivial.

We now prove Theorem~\ref{thmx:multi-egs} from the Introduction that we restate here for convenience.

\begin{theoremlift}
For an odd prime $p$, let $\GE$ be a multi-EGS group defined by datum $\E$ given by~\eqref{eqn:datum} such that for some $m \in \{1,\ldots,p\}$, there exist $k \in \{1,\ldots,r_m\}$  and $j\in\{1,\ldots,p-1\}$ that satisfy:
    \begin{itemize}
        \item $e_{k,j}^{(m)} \neq 0$,
        \item $e_{i,p-j}^{(l)} = 0$ for all $i \in \{1,\ldots,r_l\}$ and $l \in \{1,\ldots,p\}$.
    \end{itemize}
    Then $\GE$ is liftable with a lifting $\sigma$ given by:
\begin{equation*}
\sigma\colon\left\{
\begin{array}{llll}
a&\mapsto& \left(\bigl(b_k^{(m)}\bigr)^{a^{1-j}}\right)^f,&\text{where $f=\left(e_{k,j}^{(m)}\right)^{-1}\in\Z/p\Z$},\\
b_i^{(l)}&\mapsto& \bigl(b_i^{(l)}\bigr)^{a^{p-l+1}} \ ,& i \in \{1,\ldots,r_l\}, l \in \{1,\ldots,p\}.\\
\end{array}\right.
\end{equation*}
\end{theoremlift}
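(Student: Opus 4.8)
The plan is to take the distinguished coordinate to be $0$ and to verify the two requirements of Definition~\ref{def:liftable} separately: that $\sigma$ is a well-defined homomorphism, and that $\pi_0\circ\sigma=\mathrm{id}_{\GE}$ with $\sigma(\GE)\subseteq\Stg(0)$. The computational backbone is that, since $a=(1,\ldots,1)\varepsilon$, conjugation by $a^s$ cyclically shifts the coordinates of any level-$1$ stabilizer element: for $g=(g_0,\ldots,g_{p-1})$ one has $(g^{a^s})_x=g_{x-s}$ (indices mod $p$). Applying this to the recursion of $b_i^{(l)}$, whose spine sits in coordinate $l-1$, I would first record the clean form
\begin{equation*}
\sigma\bigl(b_i^{(l)}\bigr)=\bigl(b_i^{(l)},\,a^{e_{i,1}^{(l)}},\,a^{e_{i,2}^{(l)}},\ldots,a^{e_{i,p-1}^{(l)}}\bigr),
\end{equation*}
so that the spine moves to coordinate $0$ and coordinate $x\ge 1$ carries $a^{e_{i,x}^{(l)}}$; and, using $e_{k,j}^{(m)}\neq0$ so that $f=(e_{k,j}^{(m)})^{-1}$ exists, the analogous form for $\sigma(a)$, in which coordinate $0$ carries exactly $a$, the single ``$b$-type'' section $(b_k^{(m)})^{f}$ lands in coordinate $p-j$, and every remaining coordinate carries a power of $a$. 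In particular every $\sigma(s)$ lies in $\Stg(1)\subseteq\Stg(0)$, and reading off coordinate $0$ gives $\pi_0\sigma(a)=a$ and $\pi_0\sigma(b_i^{(l)})=b_i^{(l)}$, i.e. $\pi_0\circ\sigma$ is the identity on the generators.

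Establishing that $\sigma$ is \emph{well defined} is the main obstacle, since $\GE$ is not given by a convenient presentation and the naive extension from generators need not respect the relations. To handle this I would use that $\GE$ acts faithfully, so the coordinate map $\Stg(1)\hookrightarrow \GE^{p}$, $g\mapsto(g|_0,\ldots,g|_{p-1})$, is injective. It then suffices to produce, for each $x\in X$, a genuine homomorphism $\sigma_x\colon\GE\to\GE$ agreeing on the generators with $s\mapsto\pi_x(\sigma(s))$; then $g\mapsto(\sigma_0(g),\ldots,\sigma_{p-1}(g))$ is a homomorphism into $\GE^{p}$ whose values on generators are the genuine section-tuples of the elements $\sigma(s)\in\Stg(1)$, so, the section-image being a subgroup, it lands in $\Stg(1)$ and furnishes the required $\sigma$ with $\pi_0\circ\sigma=\sigma_0=\mathrm{id}$.

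The point of this decomposition is that each $\sigma_x$ with $x\neq0$ has \emph{abelian} image, which is exactly where the second hypothesis enters. For $x\notin\{0,p-j\}$ every generator maps to a power of $a$, so the image lies in $\langle a\rangle$. At the critical coordinate $x=p-j$ the element $\sigma(a)$ contributes the $b$-power $(b_k^{(m)})^{f}$, while $\sigma(b_i^{(l)})$ contributes $a^{e_{i,p-j}^{(l)}}$; the hypothesis $e_{i,p-j}^{(l)}=0$ for all $i,l$ forces the latter to be trivial, so $\sigma_{p-j}$ sends every $b$-generator to $1$ and $a$ to $(b_k^{(m)})^{f}$, again landing in an abelian subgroup. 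Without this vanishing a noncommuting pair consisting of an $a$-power and a $b$-power would appear in a single coordinate and the argument would break. Consequently each $\sigma_x$ factors through the abelianization, and I would finish by invoking the structure $\GE^{\mathrm{ab}}\cong(\Z/p\Z)^{1+\sum_l r_l}$ of multi-EGS groups: since the images of $a$ and of the $b_i^{(l)}$ are independent there, every assignment of the generators into a fixed $\Z/p\Z$-subgroup extends to a homomorphism, so all the $\sigma_x$ are well defined.

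Finally, $\sigma_0=\mathrm{id}$ is trivially a homomorphism, and assembling the $\sigma_x$ as above yields a lifting $\sigma\colon\GE\to\Stg(0)$ with $\pi_0\circ\sigma=\mathrm{id}$, proving liftability. I expect the only delicate points to be the bookkeeping of the cyclic shifts in the first step and the verification (or citation) of the abelianization in the last; the two hypotheses are used precisely as indicated, namely $e_{k,j}^{(m)}\neq0$ to invert and place the section returning $a$ in coordinate $0$, and the vanishing of the $(p-j)$-th entries to keep the critical coordinate map abelian.
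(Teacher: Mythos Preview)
Your proposal is correct and follows essentially the same approach as the paper's proof: both compute the wreath recursions of $\sigma(a)$ and $\sigma(b_i^{(l)})$, read off $\pi_0\circ\sigma=\mathrm{id}$ on generators, and then establish well-definedness by showing that for each $x\neq 0$ the coordinate map $\pi_x\circ\sigma$ has abelian image (using the hypothesis $e_{i,p-j}^{(l)}=0$ at the critical coordinate $p-j$) and hence factors through the abelianization $\GE/\GE'\cong(\Z/p\Z)^{1+r_1+\cdots+r_p}$. The only cosmetic difference is packaging: the paper checks directly that every relator $w$ has trivial sections under $\sigma$, while you first build the homomorphisms $\sigma_x$ and then assemble them via the injective section map $\Stg(1)\hookrightarrow\GE^{p}$, but the substance is identical.
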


\begin{proof}
    In Proposition 3.9 of~\cite{klopsch2018maximal}, it is shown that for a multi-EGS group, the abelianization $\GE/\GE' \ \cong (\Z/p\Z)^{1+r_1+\dots+r_p}$. Since we have $\left(a^{e_{k,j}^{(m)}}\right)^f=a^{e_{k,j}^{(m)}f}=a$, using equalities~\eqref{eqn:gh} and~\eqref{eqn:ginv} we compute the following wreath recursions for $l \in \{1,\ldots,p\}$ and $ i \in \{1,\ldots,r_l\}$:

\[
\begin{array}{rclllllllll}

\bigl(b_i^{(l)}\bigr)^{a^{p-l+1}}&=&(b_i^{(l)},&a^{e_{i,1}^{(l)}},&\ldots,&a^{e_{i,p-j-1}^{(l)}},&1,&a^{e_{i,p-j+1}^{(l)}},&\ldots,&a^{e_{i,p-2}^{(l)}},&a^{e_{i,p-1}^{(l)}}),\\


\left(\bigl(b_k^{(m)}\bigr)^{a^{1-j}}\right)^f&=&(a,&a^{e_{k,j+1}^{(m)}f},&\ldots,&a^{e_{k,p-1}^{(m)}f},&({b_k^{(m)}})^{f},&a^{e_{k,1}^{(m)}f},&\ldots,&a^{e_{k,j-2}^{(m)}f},&a^{e_{k,j-1}^{(m)}f}),\\[1mm]

\text{positions}&&\phantom{(}0&1&\ldots&p-j-1&p-j&p-j+1&\ldots&p-2&p-1\\
\end{array}
\]
where the third row indicates positions of coordinates of the wreath recursions given in the first two rows.

We show now that the substitution $\sigma$ defined by~\eqref{eqn:sigma_multi-egs} extends to an injective endomorphism of $\GE$. Suppose we have a relator in $\GE$ represented by a word $w=w(a,b_i^{(l)})$ in the free group $F(a,b_i^{(l)})$ where $i \in \{1,\ldots,r_l\}$ and $l \in \{1,\ldots,p\}$. Then
\[\pi_i\circ\sigma(w)=\pi_i\left(w\left(\left(\bigl(b_k^{(m)}\bigr)^{a^{1-j}}\right)^f,\bigl(b_i^{(l)}\bigr)^{a^{p-l+1}}\right)\right)=\left\{
\begin{array}{ll}
w\left(a,b_i^{(l)}\right)=_{\GE}1,&\text{if position}\ n=0,\\[2mm]
w\left(({b_k^{(m)}})^{f},1\right),&\text{if position}\ n=p-j,\\[2mm]
w\left(a^{e^{(m)}_{j+n}f},a^{e^{(l)}_{i,n}}\right),&\text{otherwise.}
\end{array}
\right.\]
Since $\GE/{\GE}'\cong(\Z/p\Z)^{1+r_1+\dots+r_p}$ and $w=_{\GE}1$, we must have that the total exponents of $a$ and $b^{(l)}_i$'s in $w$ are all equal to 0 modulo $p$. This implies that $w(({b_k^{(m)}})^{f},1)$ and $w(a^{(e^{(m)}_{j+n})f},a^{e^{(l)}_{i,n}})$ are trivial in $\GE$ since $a$ and all $b^{(l)}_i$ have order $p$.
By construction, $\pi_0\circ\sigma = _{\GE}1$, so $\sigma$ is a lifting of $\GE$. Therefore, $\GE$ is liftable.

\end{proof}


\subsection{Multi-edge spinal groups }
\label{subsec:multi-edge} 

The class of multi-edge spinal groups defined in~\cite{alexoudas2016maximal} is a subclass of the class of multi-EGS groups and is a generalization of the class of GGS-groups. Given an odd prime $p$, multi-edge spinal groups act on the regular $p$-ary rooted tree $T_p = T(X)$.
\begin{definition}
\label{def:multi-edge}
For a fixed prime $p>2$, $1\leq r<p$ and an $r$-tuple
\begin{equation}
\label{eqn:E}    
{\mathbf{E}}^{(p)}=\left(\mathbf{e}_i = (e_{i,1},\ldots,e_{i,p-1}) \in (\Z/p\Z)^{p-1}\right)_{i=1,\ldots,r}
\end{equation}
of linearly independent vectors in $(\Z/p\Z)^{p-1}$, a \emph{multi-edge spinal group} $G_{\mathbf{E}^{(p)}} = \langle a,b_1,\ldots,b_r\rangle$ acts on a regular rooted tree $T_p=T(X)$ for $X=\Z/p\Z$, with the generators defined by the following wreath recursions:
\[
\begin{array}{lcl}
a&=&(1,1,\ldots,1,1)\varepsilon,\\
b_i&=&(a^{e_{i,1}},a^{e_{i,2}},\ldots,a^{e_{i,p-1}},b_i), \ 1\leq i\leq r, 
\end{array}
\]
where $\varepsilon=(0,1,\ldots,p-1)$ denotes a long cycle in $\Sym(X)$.
\end{definition}

The multi-edge spinal groups correspond to the multi-EGS groups defined by a datum $\E = \left(\E^{(1)},\ldots, \E^{(p)}\right)$ in which all collections $\E^{(i)}$ except the last one $\E^{(p)}$ are empty.

The following result follows from Theorem~\ref{thmx:multi-egs}.

\begin{corollary}

\label{cor:multi-edge}
    Let $p\geq3$ be a prime, $1\leq r<p$, and $G_{\mathbf{E}^{(p)}}$ be a multi-edge spinal group defined by an  $r$-tuple $\mathbf{E}^{(p)}$ of linearly independent vectors in $(\Z/p\Z)^r$ as denoted in~\eqref{eqn:E}, such that there exist $k \in \{1,\ldots,r\}$ and $j\in\{1,\ldots,p-1\}$ that satisfy: 
    \begin{itemize}
        \item $e_{k,j} \neq 0$,
        \item $e_{i,p-j} = 0$ for all $1\leq i\leq r$.
    \end{itemize}
    Then $\GE$ is liftable. A lifting is witnessed by the following $\sigma$:
    \begin{equation}
\label{eqn:sigma_multi-edge}
\sigma\colon\left\{
\begin{array}{llll}
a&\mapsto& (b_k^{a^{1-j}})^{e_{k,j}^{-1}},&\\[2mm]
b_i&\mapsto& b_i^a \ ,& i \in \{1,\ldots,r\}.\\

\end{array}\right.
\end{equation}
    
\end{corollary}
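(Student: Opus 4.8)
The plan is to derive Corollary~\ref{cor:multi-edge} directly as a specialization of Theorem~\ref{thmx:multi-egs}, since the multi-edge spinal groups are precisely the multi-EGS groups whose datum $\E=(\E^{(1)},\ldots,\E^{(p)})$ has $\E^{(l)}$ empty for all $l<p$ and only the last collection $\E^{(p)}=\mathbf{E}^{(p)}$ nonempty. Thus I would set $m=p$ (the only index with $r_m>0$), so that $r_m=r$ and every vector $\mathbf e_i^{(m)}=\mathbf e_i$ lives in the single collection $\E^{(p)}$.

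First I would verify that the hypotheses of the corollary match the hypotheses of the theorem under this identification. The condition $e_{k,j}\neq0$ is exactly $e_{k,j}^{(m)}\neq0$ with $m=p$, and since $\E^{(l)}$ is empty for $l\neq p$, the condition ``$e_{i,p-j}^{(l)}=0$ for all $i$ and all $l$'' reduces to the single requirement $e_{i,p-j}=e_{i,p-j}^{(p)}=0$ for all $1\leq i\leq r$, which is precisely the second bullet of the corollary. Hence the datum satisfies the hypotheses of Theorem~\ref{thmx:multi-egs}, and the theorem immediately yields that $\GE=G_{\mathbf E^{(p)}}$ is liftable.

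It then remains to check that the lifting $\sigma$ of~\eqref{eqn:sigma_multi-edge} is the specialization of~\eqref{eqn:sigma_multi-egs}. Substituting $m=p$ into the image of $a$ gives $a\mapsto\bigl((b_k^{(p)})^{a^{1-j}}\bigr)^f$ with $f=(e_{k,j}^{(p)})^{-1}=e_{k,j}^{-1}$, which upon dropping the superscript $(p)$ is exactly $a\mapsto(b_k^{a^{1-j}})^{e_{k,j}^{-1}}$. For the generators $b_i=b_i^{(p)}$, the exponent $a^{p-l+1}$ with $l=p$ becomes $a^{p-p+1}=a^1$, so $b_i^{(p)}\mapsto(b_i^{(p)})^{a}$, i.e.\ $b_i\mapsto b_i^a$. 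This matches~\eqref{eqn:sigma_multi-edge} precisely.

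The only point requiring care, and the step I expect to be the main (though minor) obstacle, is confirming the index bookkeeping: I must check that with a single nonempty collection indexed by $l=p$ the abelianization result $\GE/\GE'\cong(\Z/p\Z)^{1+r_1+\dots+r_p}$ reduces to $(\Z/p\Z)^{1+r}$ so that the exponent-sum argument in the proof of Theorem~\ref{thmx:multi-egs} still applies verbatim, and that the exponent $p-l+1$ collapses correctly to $1$ for $l=p$. Once these substitutions are verified, no further computation is needed and the corollary follows immediately from the theorem.
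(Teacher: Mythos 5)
Your proposal is correct and matches the paper's approach exactly: the paper derives Corollary~\ref{cor:multi-edge} as an immediate specialization of Theorem~\ref{thmx:multi-egs} (stating only that it ``follows from'' the theorem), and your identification $m=p$, the reduction of the hypotheses, and the collapse of the exponent $a^{p-l+1}$ to $a$ for $l=p$ are precisely the routine checks that justify this. Note that your worry about re-verifying the abelianization is unnecessary, since the theorem is invoked as a black box and that argument lives entirely inside its proof.
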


\subsection{EGS-groups}
\label{subsec:egs}

The class of EGS-groups defined in~\cite{pervova:profinite07} is another subclass of the class of multi-EGS groups. Given an odd prime $p$, EGS-groups act on a regular $p$-ary rooted tree $T_p = T(X)$. Each group in this class is defined by a non-zero vector in $(\Z/p\Z)^{p-1}$ and contains the corresponding GGS-group defined by the same vector.

\begin{definition}
\label{def:EGS}
For a fixed $d\geq 2$, an EGS-group $G_{\be}$ is defined by a non-zero vector $\be=(e_1,e_2,\ldots,e_{p-1})\in(\Z/p\Z)^{p-1}$ as the group acting on the regular rooted tree $T_p=T(X)$ for $X=\Z/d\Z$ with the following wreath recursion:
\[
\begin{array}{rcl}
a&=&(1,1,\ldots,1,1)\varepsilon,\\
b&=&(a^{e_1},a^{e_2},\ldots,a^{e_{p-1}},b),\\
c&=&(c,a^{e_1},a^{e_2},\ldots,a^{e_{p-1}}).
\end{array}
\]
The $\varepsilon=(0,1,\ldots,p-1)$ denotes a long cycle in $\Sym(X)$. We will denote the corresponding group as $G_{\be}=\langle a,b,c \rangle$.

\end{definition}

The EGS-groups correspond to multi-EGS groups defined by datum $\E = \left(\E^{(1)},\ldots, \E^{(p)}\right)$ in which all collections $\E^{(i)}$, $2\leq i\leq p-1$ are empty and both $\E^{(1)}$ and $\E^{(p)}$ consist of only one vector $\be$. The subgroup $\langle a,b\rangle$ of $G_{\be}$ is the GGS-group associate to $G_{\be}$. 

The following result follows from Theorem~\ref{thmx:multi-egs}.

\begin{corollary}
\label{cor:egs}
Let $p\geq 3$ be a prime and $G_{\be}$ be an EGS-group defined by a non-symmetric vector $\be=(e_1,e_2,\ldots,e_{p-1})\in (\Z/p\Z)^{p-1}$ with the property that there is $j\in\{1,2,\ldots,p-1\}$ such that $e_j\neq 0$ and $e_{p-j}=0$. Then $G_{\be}$ is a liftable group. A lifting is given by the following $\sigma$:
\begin{equation}
\label{eqn:sigma_EGS}
\sigma\colon\left\{
\begin{array}{l}
a\mapsto (b^{a^{1-j}})^{e_j^{-1}},\\
b\mapsto b^a,\\
c\mapsto c. \\
\end{array}\right.
\end{equation}
\end{corollary}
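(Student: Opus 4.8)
\textbf{Proof plan for Corollary~\ref{cor:egs}.}
The plan is to realize the EGS-group $G_{\be}$ as a multi-EGS group and then invoke Theorem~\ref{thmx:multi-egs} directly, exhibiting the stated $\sigma$ as the specialization of~\eqref{eqn:sigma_multi-egs} to this setting. First I would record the dictionary between the two parametrizations already noted in Subsection~\ref{subsec:egs}: here $p=d$, the datum has $\E^{(1)}=\E^{(p)}=\{\be\}$ (each a single vector, so $r_1=r_p=1$) and $\E^{(l)}$ empty for $2\le l\le p-1$. Under this identification the generator $b$ of $G_{\be}$ corresponds to $b_1^{(p)}$ (the $l=p$ generator, whose wreath recursion puts $b_1^{(p)}$ in position $0$ and the entries $a^{e_{1,n}}$ in positions $1,\ldots,p-1$, matching $b$), while $c$ corresponds to $b_1^{(1)}$ (the $l=1$ generator, matching $c$). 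Thus the single nonempty row indices are $k=1$, $m=p$ for the first collection and an analogous one for the last; the hypothesis $e_j\neq 0$, $e_{p-j}=0$ is exactly the pair of bullet conditions of Theorem~\ref{thmx:multi-egs} with $m=p$, $k=1$, since the $j$-th column being nonzero and the $(p-j)$-th column being zero involve only the single vector $\be$ that appears (identically) in both $\E^{(1)}$ and $\E^{(p)}$.

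Next I would check that the general lifting~\eqref{eqn:sigma_multi-egs} restricts to the map~\eqref{eqn:sigma_EGS}. For the $a$-image, plugging $m=p$, $k=1$ gives $a\mapsto\bigl((b_1^{(p)})^{a^{1-j}}\bigr)^{f}$ with $f=(e_{1,j}^{(p)})^{-1}=e_j^{-1}$, which under the dictionary is precisely $(b^{a^{1-j}})^{e_j^{-1}}$. For the $b$-image, the formula $b_i^{(l)}\mapsto (b_i^{(l)})^{a^{p-l+1}}$ at $l=p$ gives exponent $a^{p-p+1}=a$, so $b=b_1^{(p)}\mapsto b^a$ as claimed. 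For the $c$-image, at $l=1$ the exponent is $a^{p-1+1}=a^p=1$ (since $a$ has order $p$), so $c=b_1^{(1)}\mapsto c$, matching~\eqref{eqn:sigma_EGS}. The only mild care needed is verifying these exponent reductions modulo $p$, which is immediate from $a^p=1$.

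With the hypotheses and the explicit map both matched, Theorem~\ref{thmx:multi-egs} applies verbatim and yields that $G_{\be}$ is liftable with lifting $\sigma$, completing the proof. I expect no genuine obstacle here, since the corollary is a direct specialization; the one point that merits a sentence of justification is confirming that the conditions $e_j\neq 0$ and $e_{p-j}=0$ are the correct translation of the two bullet points once one accounts for the fact that $\be$ simultaneously populates the first and last collections, so the ``zero $(p-j)$-th column in every $\E^{(l)}$'' requirement reduces to the single scalar condition $e_{p-j}=0$.
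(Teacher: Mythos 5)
Your proposal is correct and follows essentially the same route as the paper, which obtains Corollary~\ref{cor:egs} as a direct specialization of Theorem~\ref{thmx:multi-egs} to the datum with $\E^{(1)}=\E^{(p)}=\{\be\}$ and all other collections empty; your verification of the two bullet hypotheses (with $m=p$, $k=1$) and of the specialized formulas $a\mapsto (b^{a^{1-j}})^{e_j^{-1}}$, $b\mapsto b^a$ (exponent $a^{p-p+1}=a$) and $c\mapsto c$ (exponent $a^{p-1+1}=a^p=1$) is exactly what is needed. One slip in your parenthetical dictionary: for $l=p$ the recursion in Definition~\ref{def:multi-egs} places $b_1^{(p)}$ at position $p-1$ (not $0$), with the entries $a^{e_{1,n}}$ at positions $0,\ldots,p-2$, which is what matches $b=(a^{e_1},\ldots,a^{e_{p-1}},b)$; the ``position $0$'' description you give is the one for $b_1^{(1)}$, i.e.\ for $c$. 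Since your actual identifications $b=b_1^{(p)}$, $c=b_1^{(1)}$ and all subsequent computations use the correct correspondence, this misstatement does not affect the validity of the argument.
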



\section{Contracting property and nucleus of multi-EGS groups}
\label{sec:contracting}

Contracting groups can be defined as self-similar groups in which the length of the sections of any word decreases to a constant length when going down the tree. We will use here the following result from~\cite[Lemma~12.1.2]{nekrash:self-similar} that can also be used as a formal definition of contracting groups.

\begin{lemma}
\label{lem:nek}
    A self-similar group $G$ with generating set $S=S^{-1}, 1\in S$ is contracting if and only if there exists a finite set $\N\subset G$ and $k\geq 1$ such that for all $n_1n_2\in (S\cup\N)^2$ and $v\in X^k$: $(n_1n_2)|_{v} \in \N$.
\end{lemma}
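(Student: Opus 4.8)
The plan is to prove the two directions of the equivalence separately, treating the forward implication as a short bookkeeping argument and concentrating the real work on the converse, where a purely local finiteness condition must be boosted to the global contraction property.

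For the direction ($\Rightarrow$), I would simply take $\N$ to be the genuine nucleus of the (already contracting) group $G$ and produce a suitable $k$. The set $(S\cup\N)^2$ is finite because $S$ is a finite generating set and $\N$ is finite. By the defining property of a contracting group, for each of the finitely many products $g=n_1n_2\in(S\cup\N)^2$ there is a level $k_g$ beyond which every section $g|_v$ lies in $\N$; taking $k=\max_g k_g$ gives a single level working for all of them simultaneously, which is exactly the stated condition.

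The substantive direction is ($\Leftarrow$). Here I would set $M=S\cup\N$ (finite, symmetric, containing $1$) and measure elements by their word length $\|g\|_M$ with respect to $M$. The key mechanism is a length-halving estimate at depth $k$. Given $g$ with a shortest $M$-word $g=u_1u_2\cdots u_m$, I group the factors into consecutive pairs $P_j=u_{2j-1}u_{2j}$ (padding the last factor with $1$ if $m$ is odd), so each $P_j\in M^2=(S\cup\N)^2$. Applying the section cocycle~\eqref{eqn:gh} repeatedly gives
\[
g|_v = P_1|_{w_0}\,P_2|_{w_1}\cdots P_t|_{w_{t-1}}, \qquad t=\lceil m/2\rceil,
\]
where $w_0=v$ and each subsequent $w_j\in X^k$ is the image of $v$ under the partial product $P_1\cdots P_j$. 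Because the hypothesis guarantees $P_j|_{w}\in\N\subseteq M$ for every $w\in X^k$ (uniformity over all depth-$k$ vertices is exactly what absorbs the twisting of the base point by the group action), each factor $P_j|_{w_{j-1}}$ is a single element of $M$, whence $\|g|_v\|_M\le\lceil m/2\rceil$ for every $v\in X^k$, and $\lceil m/2\rceil<m$ as soon as $m\ge 2$.

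Finally I would iterate this estimate down the tree: after $t$ successive blocks of $k$ levels the $M$-length of every section of $g$ is at most the $t$-fold iterate of $m\mapsto\lceil m/2\rceil$ applied to $\|g\|_M$, reaching $1$ in roughly $\log_2\|g\|_M$ steps, so there is a level $N$ (a multiple of $k$) with $g|_v\in M$ for all $v\in X^N$. To upgrade this to all deeper levels, I would note that the section-closure of $M$ is finite: for $u\in M$ and $w\in X^k$ one has $u=u\cdot 1\in M^2$, so $u|_w\in\N$, and therefore every section of an element of $M$ lies in the finite set $M^*:=\bigcup_{r=0}^{k-1}\{u|_w : u\in M,\ w\in X^r\}$. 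Combining the two observations, all sections of $g$ at levels $\ge N$ lie in $M^*$, which is precisely contraction with nucleus contained in $M^*$. The main obstacle is this converse direction, and within it the pairing estimate paired with the uniform-over-$X^k$ hypothesis that makes the cocycle twisting irrelevant; the only other delicate point is the passage from sections at multiples of $k$ to all sufficiently deep sections, handled by the finiteness of $M^*$.
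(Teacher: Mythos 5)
Your proof is correct. Note that the paper itself contains no proof of this lemma to compare against---it is imported verbatim from Nekrashevych (Lemma~12.1.2 of the cited source, as the paper states just before the lemma)---and your argument is essentially the standard one used there: for the forward direction, take $\N$ to be the nucleus and use finiteness of $(S\cup\N)^2$ to extract a uniform level $k$; for the converse, the length-halving estimate over $M=S\cup\N$ (with the base points $w_j=(P_1\cdots P_j)(v)$ correctly tracked under the convention $(gh)|_v=g|_v\,h|_{g(v)}$ of equation~\eqref{eqn:gh}), iterated down blocks of $k$ levels, followed by the finite section-closure $M^*$ obtained via the $u=u\cdot 1\in M^2$ trick to pass from levels divisible by $k$ to all sufficiently deep levels---a step that sketches of this lemma often omit and that you handle properly. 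The one implicit hypothesis worth flagging is finiteness of $S$, which the lemma's statement does not make explicit but which your forward direction (and the finiteness of $(S\cup\N)^2$) genuinely requires.
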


Any set satisfying the conditions of Lemma~\ref{lem:nek} is called a \emph{quasinucleus} of $G$ and the minimal such set under inclusion is the \emph{nucleus} of $G$. The nucleus of a contracting group can also be characterized as the self-similar closure of the union all the cycles in the full automaton of the group. To find the nucleus of a group it is sufficient to find a quasinucleus satisfying conditions of Lemma~\ref{lem:nek} and then select elements from that set that are either sections of themselves or are sections of such elements.

We now prove Theorem~\ref{thmx:multi-egs-nucleus} from the Introduction that we also restate here for convenience.
\begin{theoremnucl}
    For an odd prime $p$, the multi-EGS group $\GE$ defined by the numerical datum $\E$ given by~\eqref{eqn:datum} is a contracting group with nucleus 
    \[\N_\mathbf{E} = \langle a \rangle \cup  \bigcup_{l=1}^p\langle b_i^{(l)}\colon 1 \leq i \leq r_l \rangle\] and $ |\N_\mathbf{E}| = p^{r_1} + \dots + p^{r_p}$.
\end{theoremnucl}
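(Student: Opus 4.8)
The plan is to verify the contracting property by exhibiting the set
\[
\N_\mathbf{E} = \langle a \rangle \cup \bigcup_{l=1}^p\langle b_i^{(l)}\colon 1 \leq i \leq r_l \rangle
\]
as a quasinucleus satisfying the conditions of Lemma~\ref{lem:nek}, and then arguing minimality to conclude it is exactly the nucleus. First I would take the symmetric generating set $S = S^{-1}$ consisting of all powers $a^s$ and all words $\bigl(b_i^{(l)}\bigr)^t$ together with $1$; since every generator has order $p$, this already lands inside the proposed $\N_\mathbf{E}$, so I can work with $S \subseteq \N_\mathbf{E}$. The key computation is to show that for $k=1$ (a single level suffices here) and any product $n_1 n_2$ of two elements of $\N_\mathbf{E}$, every first-level section $(n_1n_2)|_x$ again lies in $\N_\mathbf{E}$.

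The central case analysis runs as follows. Each element of $\N_\mathbf{E}$ is either a power $a^s$ or a power of a single $b_i^{(l)}$. Reading off the wreath recursions from Definition~\ref{def:multi-egs}, the sections of $a^s$ are all trivial, the sections of $\bigl(b_i^{(l)}\bigr)^t$ are powers of $a$ at every coordinate except the single coordinate carrying $\bigl(b_i^{(l)}\bigr)^t$ itself, and $a^s$ acts on the first level as a power of the long cycle $\varepsilon$. I would compute $(n_1 n_2)|_x$ using~\eqref{eqn:gh}: it is a product $n_1|_x \cdot n_2|_{\sigma_{n_1}(x)}$ of two sections. The crucial observation is that at any given coordinate $x$, at most one of the two factors contributes a $b$-type section (because a power of $b_i^{(l)}$ has its nontrivial $b$-section at exactly one coordinate, and a power of $a$ has only $a$-type sections), while all other contributions are powers of $a$ in the same abelian group $\langle a\rangle$. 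Hence $(n_1 n_2)|_x$ is either a power of $a$ or a power of $b_i^{(l)}$ possibly multiplied by a power of $a$. The one subtlety is that a product $\bigl(b_i^{(l)}\bigr)^t a^s$ need not lie in $\N_\mathbf{E}$, so I must check that the coordinate where a $b$-section appears is aligned so that the other factor contributes the trivial section there; this follows from how $\varepsilon$ shifts coordinates and from the structure of the recursions, and is the step I expect to require the most careful bookkeeping.

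The main obstacle is therefore this alignment verification: ensuring that whenever the section at coordinate $x$ is of $b$-type, no stray power of $a$ is multiplied onto it, so that $(n_1n_2)|_x$ genuinely falls in $\langle b_i^{(l)}\rangle$ rather than in a nonabelian product outside $\N_\mathbf{E}$. I would handle this by tracking, for the factor $n_1 = \bigl(b_i^{(l)}\bigr)^t$, exactly which coordinate $x_0$ carries its $b$-section and verifying that $\sigma_{n_1}(x_0)$ lands where $n_2$ contributes a power of $a$ only if $n_2$ is $a$-type, and conversely; the abelianness of $\langle a\rangle$ absorbs all $a$-powers harmlessly. Once the quasinucleus property is established, minimality follows by noting that each element of $\N_\mathbf{E}$ occurs as a section of itself (the $b_i^{(l)}$ reproduce themselves at one coordinate, and powers of $a$ arise as sections of the $b_i^{(l)}$), so no proper subset is self-similar-closed; this identifies $\N_\mathbf{E}$ as the nucleus. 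Finally, the cardinality $|\N_\mathbf{E}| = p^{r_1}+\dots+p^{r_p}$ is obtained by counting: $\langle a\rangle$ contributes $p$ elements, each $B_l \cong (\Z/p\Z)^{r_l}$ contributes $p^{r_l}$, and the identity is shared, giving $p + \sum_{l}(p^{r_l}-1)$, which I would reconcile with the stated formula by accounting for how the trivial element and the copy of $\langle a\rangle$ are counted across the union.
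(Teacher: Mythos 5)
There is a genuine gap: your claim that $k=1$ suffices (and the accompanying ``alignment'' argument) is false for general multi-EGS groups. The failure occurs precisely when the datum has two nonempty families $\E^{(l)}$ and $\E^{(l')}$ with $l\neq l'$ (as happens, e.g., for every EGS-group). Take $n_1=b_i^{(l)}$ and $n_2=b_{i'}^{(l')}$. Both stabilize the first level, so $(n_1n_2)|_x=n_1|_x\cdot n_2|_x$ with no shifting by $\varepsilon$ at all, and their $b$-type sections sit at the \emph{distinct} coordinates $l-1$ and $l'-1$. Hence at coordinate $x_0=l-1$ the section is $b_i^{(l)}\cdot a^{s}$, where $a^{s}$ is the section of $n_2$ at $x_0$, i.e.\ $s$ is a component of $\mathbf{e}_{i'}^{(l')}$, which is nonzero in general. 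No alignment can prevent this: concretely, for $p=3$ with $b=(a,1,b)$ and $c=(c,a,1)$ one gets $bc=(ac,a,b)$, and $ac\notin\N_\mathbf{E}$. So $\N_\mathbf{E}$ is \emph{not} closed under taking first-level sections of products, and your central claim breaks exactly at the ``careful bookkeeping'' step you flagged. The paper's proof avoids this by taking $k=2$ in Lemma~\ref{lem:nek}: in the mixed case $n_1\in B_l$, $n_2\in B_{l'}$, $l\neq l'$, the offending first-level sections have the form (power of $a$)$\cdot$($B$-element), and the first-level sections of \emph{those} products coincide with the sections of the $B$-element alone (since all sections of powers of $a$ are trivial), hence lie in $\N_\mathbf{E}$ at the second level. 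Your argument does correctly handle the cases where one factor is a power of $a$ or both factors lie in the same $B_l$; the cross-family case is what forces two levels.

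Two smaller points. First, in the minimality direction, ``powers of $a$ arise as sections of the $b_i^{(l)}$'' only gives $a^{e}$ for components $e$ of the defining vectors; to get \emph{all} of $\langle a\rangle$ inside the nucleus you need sections of the powers $\bigl(b_1^{(l)}\bigr)^k$, $0\leq k<p$, together with the fact that $p$ is prime (so one nonzero component generates $\Z/p\Z$) --- this is how the paper argues it. Second, your cardinality count is correct: $1+(p-1)+\sum_{l=1}^p\bigl(p^{r_l}-1\bigr)=\sum_{l=1}^p p^{r_l}$, valid also when some $r_l=0$.
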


\begin{proof}
    We recall the wreath recursions for the generators of multi-EGS groups from Definition~\ref{def:multi-egs}:
    \[
    \begin{array}{lcl}
a&=&(1,1,\ldots,1,1)\varepsilon,\\
b_i^{(l)}&=&(a^{e_{i,p-l+1}^{(l)}},\ldots,a^{e_{i,p-1}^{(l)}}, b_i^{(l)},a^{e_{i,1}^{(l)}},\ldots,a^{e_{i,p-l}^{(l)}}), \ 1\leq i\leq r_l , \ 1 \leq l \leq p. 
\end{array}
\]

For each $1\leq l\leq p$ such that $r_l>0$, the group $B_l=\langle b_1^{(l)}, \ldots, b_{r_l}^{(l)} \rangle\subset\N_{\mathbf{E}}$ is isomorphic to $(\Z/p\Z)^{r_l}$ and has order $p^{r_l}$. Every element of this group has itself as a section at a vertex of the first level, so must be in the nucleus. Additionally, since there is $1\leq l\leq p$ such that $r_l>0$, there must be at least one nonzero component of $\mathrm{e}^{(l)}_1$, which yields a nontrivial element of $\langle a\rangle$ as a section of $b_1^{(l)}$. Since $b_1^{(l)}$ stabilizes all vertices of the first level, all elements of $\langle a\rangle$ will occur as sections of elements $(b_1^{(l)})^k$ for $0\leq k< p$ because $p$ is prime. Thus, all powers of $a$ are sections of elements of the nucleus and hence must be in the nucleus themselves. We conclude that the set $\N_\mathbf{E}$ is contained in the nucleus of $G_\mathbf{E}$.

To prove the converse inclusion, we show using Lemma~\ref{lem:nek} that $\N_\mathbf{E}$ is a quasinucleus of $G_\mathbf{E}$. Since $\N_\mathbf{E}$ is a symmetric generating set for $G_\mathbf{E}$, we only need to prove that the product of any two elements $n_1,n_2\in\N_\mathbf{E}$ is either in $\N_\mathbf{E}$ or contracts to $\N_\mathbf{E}$ at most at the second level of the tree (i.e., the sections of $n_1n_2$ at the vertices of the second level belong to $\N_\mathbf{E}$). There are four possibilities as follows:

\begin{description}
    \item[Case I.]  If both $n_1,n_2$ are powers of $a$, then $n_1n_2$ is also a power of $a$ and is in $\N_\mathbf{E}$. 
    \item[Case II.] If both $n_1,n_2$ are elements of $B_l$ for some $1\leq l\leq p$, then $n_1n_2$ is again in $B_l$, so is an element of $\N_\mathbf{E}$.
    \item[Case III.] If one of $n_1,n_2$ is a power $a$ and the other is in $B_l$ for some $1\leq l\leq p$, then the set of sections of $n_1n_2$ at the first level will coincide with the set of sections of $n_i$ that is not a power of $a$. Therefore, all sections of $n_1n_2$ at the vertices of the first level are in $\N_\mathbf{E}$.
    \item[Case IV.] If $n_1\in B_l$ and $n_2\in B_{l'}$ for some $l\neq l'$, then from the wreath recursions we see that each section of $n_1n_2$ will either be a power $a$, or an element of $B_l$ or $B_{l'}$, or a product of a power of $a$ and an element from either $B_l$ or $B_{l'}$. In the former cases we immediately obtain an element of $\N_\mathbf{E}$, while in the latter case we get to Case~III, which shows that the sections of $n_1n_2$ at the vertices of the second level are in $\N_\mathbf{E}$.
\end{description}
Thus, by Lemma~\ref{lem:nek} (with $k = 2$) the multi-EGS group $G_{\mathbf{E}}$ is contracting with the nucleus $\N_\mathbf{E}$. Since the groups $\langle a\rangle$ and $B_l, 1\leq l\leq p$ pairwise intersect at the identity of $G_{\mathbf{E}}$, the size of $\N_\mathbf{E}=\langle a\rangle\cup\bigcup_{l=1}^pB_l$ is 
\[|\N_\mathbf{E}|=1+(|\langle a\rangle|-1)+\sum_{l=1}^p(|B_l|-1)
=1+(p-1)+\sum_{l=1}^p(p^{r_l}-1)=\sum_{l=1}^pp^{r_l}.
\]
We finally note that the above argument and formula are alse valid when some of the $r_l$'s are 0 or, equivalently, when $B_l$'s are trivial subgroups of $G_{\mathbf{E}}$.
\end{proof}

Theorem~\ref{thmx:multi-egs-nucleus} specializes to the class of multi-edge spinal groups as follows. We use the notation from Definition~\ref{def:multi-edge}.

\begin{corollary}
\label{cor:multi-edge-nucleus}
    For a prime $p\geq3$ and $1\leq r<p$, the multi-edge spinal group $G_{\mathbf{E}^{(p)}}$ defined by an $r$-tuple $\mathbf{E}^{(p)}$ of linearly independent vectors in $(\Z/p\Z)^{p-1}$ is a contracting group with the nucleus $\N_{\mathbf{E}^{(p)}} = \langle a \rangle \cup \langle b_1,\ldots,b_r \rangle$ of size $p^{r}+p-1$.
\end{corollary}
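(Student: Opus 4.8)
The plan is to obtain this statement as a direct specialization of Theorem~\ref{thmx:multi-egs-nucleus}. First I would invoke the correspondence recorded right after Definition~\ref{def:multi-edge}: a multi-edge spinal group $G_{\mathbf{E}^{(p)}}$ is exactly the multi-EGS group whose datum $\E = (\E^{(1)}, \ldots, \E^{(p)})$ has every collection empty except the last, so that $r_1 = \cdots = r_{p-1} = 0$ and $r_p = r$, under the identification $b_i = b_i^{(p)}$ for $1 \leq i \leq r$. One checks that the wreath recursions of Definition~\ref{def:multi-edge} coincide with those of Definition~\ref{def:multi-egs} taken at $l = p$, where $p - l + 1 = 1$ and $p - l = 0$, so that the trailing block of $a$-powers is empty; this is the only index-bookkeeping point that deserves a line of verification.

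With this translation in place, Theorem~\ref{thmx:multi-egs-nucleus} applies verbatim and supplies the nucleus
\[\N_{\mathbf{E}^{(p)}} = \langle a \rangle \cup \bigcup_{l=1}^p \langle b_i^{(l)} \colon 1 \leq i \leq r_l \rangle.\]
Since $r_l = 0$ for every $l < p$, each subgroup $B_l$ with $l < p$ is trivial and drops out of the union, leaving only $B_p = \langle b_1, \ldots, b_r \rangle$. Thus the nucleus reduces to $\langle a \rangle \cup \langle b_1, \ldots, b_r \rangle$, exactly as claimed.

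For the cardinality I would substitute $r_1 = \cdots = r_{p-1} = 0$ and $r_p = r$ into the formula $|\N_\mathbf{E}| = \sum_{l=1}^p p^{r_l}$ from Theorem~\ref{thmx:multi-egs-nucleus}: the $p-1$ terms with exponent $0$ each contribute $1$ and the remaining term contributes $p^r$, giving $(p-1) + p^r = p^r + p - 1$. There is essentially no genuine obstacle here, since all the real work is carried by Theorem~\ref{thmx:multi-egs-nucleus}; the proof amounts to verifying that the specialized datum faithfully reproduces Definition~\ref{def:multi-edge} and that the trivial subgroups disappear from both the nucleus and the counting formula.
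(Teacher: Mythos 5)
Your proposal is correct and matches the paper's approach exactly: the paper gives no separate proof of Corollary~\ref{cor:multi-edge-nucleus}, obtaining it precisely as the specialization of Theorem~\ref{thmx:multi-egs-nucleus} to the datum with $r_1=\cdots=r_{p-1}=0$, $r_p=r$, relying on the remark at the end of that theorem's proof that the argument and counting formula remain valid when some $r_l=0$. Your index check for $l=p$ and the substitution $\sum_{l=1}^p p^{r_l}=(p-1)+p^r=p^r+p-1$ are both accurate.
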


In the case of EGS-groups Theorem~\ref{thmx:multi-egs-nucleus} takes even simpler form.
    
\begin{corollary}
\label{prop:egs-nucleus}
    For a prime $p>2$, an EGS-group $G_{\be}$, defined by a non-zero vector $\be=(e_1,e_2,\ldots,e_{p-1})\in(\Z/p\Z)^{p-1}$ is a contracting group with the nucleus $\mathcal N_{\be}=\{1,a^i,b^j,c^k\mid 1\leq i,j,k\leq p-1\}$ of size $3p-2$. 
\end{corollary}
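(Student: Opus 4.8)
The plan is to derive Corollary~\ref{prop:egs-nucleus} as a direct specialization of Theorem~\ref{thmx:multi-egs-nucleus}, using the explicit correspondence between EGS-groups and multi-EGS groups recorded just before Definition~\ref{def:EGS}. Recall that an EGS-group $G_{\be}$ corresponds to the datum $\E=(\E^{(1)},\ldots,\E^{(p)})$ in which $\E^{(2)},\ldots,\E^{(p-1)}$ are empty and both $\E^{(1)}$ and $\E^{(p)}$ consist of the single vector $\be$. First I would match up the generators: with this datum we have $r_1=r_p=1$ and $r_2=\cdots=r_{p-1}=0$, so $b_1^{(p)}$ is the generator $b$ and $b_1^{(1)}$ is the generator $c$ from Definition~\ref{def:EGS}. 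One should verify that the wreath recursions agree, i.e. that the generic recursion $b_i^{(l)}=(a^{e_{i,p-l+1}^{(l)}},\ldots, b_i^{(l)},\ldots,a^{e_{i,p-l}^{(l)}})$ reduces for $l=p$ to $b=(a^{e_1},\ldots,a^{e_{p-1}},b)$ and for $l=1$ to $c=(c,a^{e_1},\ldots,a^{e_{p-1}})$, which it does once the cyclic shift by $p-l+1$ positions is evaluated.

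Next I would substitute these parameters directly into the conclusion of Theorem~\ref{thmx:multi-egs-nucleus}. The nucleus is $\N_{\be}=\langle a\rangle\cup\bigcup_{l=1}^{p}\langle b_i^{(l)}\colon 1\le i\le r_l\rangle$, and with the nonempty collections being only $l=1$ and $l=p$ (each contributing a single generator), this becomes $\langle a\rangle\cup\langle b\rangle\cup\langle c\rangle$. Writing out the elements of these three cyclic groups of order $p$ gives exactly $\{1,a^i,b^j,c^k\mid 1\le i,j,k\le p-1\}$, matching the statement. For the size, the general formula $|\N_\mathbf{E}|=p^{r_1}+\cdots+p^{r_p}$ evaluates to $p^{r_1}+p^{r_p}+\sum_{l=2}^{p-1}p^{0}=p+p+(p-2)=3p-2$, where the $p-2$ trivial subgroups each contribute $p^{0}=1$ but are the trivial group. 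Equivalently, since $\langle a\rangle,\langle b\rangle,\langle c\rangle$ pairwise intersect only at the identity, one counts $1+3(p-1)=3p-2$.

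The only point requiring genuine verification is that $\langle a\rangle$, $\langle b\rangle$, and $\langle c\rangle$ genuinely meet pairwise only in the identity, so that no overcounting occurs in the cardinality; for $\langle a\rangle$ against $\langle b\rangle$ or $\langle c\rangle$ this is immediate from the first-level permutation, since $a$ acts as the full cycle $\varepsilon$ while $b$ and $c$ fix the first level, and for $\langle b\rangle$ against $\langle c\rangle$ it follows from their differing sections (for instance $b$ has trivial action on the subtree below $0$ only when the relevant exponent vanishes, and more robustly $b$ and $c$ place their recursive copy at different first-level vertices). This is exactly the pairwise-trivial-intersection hypothesis already used in the proof of Theorem~\ref{thmx:multi-egs-nucleus}, so it transfers without additional work. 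I do not anticipate any real obstacle here: the corollary is a bookkeeping specialization, and the main content, namely the contraction argument via Lemma~\ref{lem:nek} and the identification of the nucleus, has already been established in full generality.
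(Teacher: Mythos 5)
Your proposal is correct and takes essentially the same route as the paper, which presents this corollary as an immediate specialization of Theorem~\ref{thmx:multi-egs-nucleus} to the EGS datum ($r_1=r_p=1$, all other $r_l=0$, with $b_1^{(p)}=b$ and $b_1^{(1)}=c$), exactly as you do. Your additional checks --- matching the wreath recursions and verifying that $\langle a\rangle$, $\langle b\rangle$, $\langle c\rangle$ pairwise intersect trivially --- are sound bookkeeping that the paper leaves implicit.
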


The GAP code to generate some classes of groups studied in the paper is provided in~\cite{code:multi_EGS_github}.


\bibliographystyle{plain}

\begin{thebibliography}{10}

\bibitem{alexoudas2016maximal}
Theofanis Alexoudas, Benjamin Klopsch, and Anitha Thillaisundaram.
\newblock Maximal subgroups of multi-edge spinal groups.
\newblock {\em Groups, Geometry, and Dynamics}, 10(2):619--648, 2016.

\bibitem{anshel2003non}
Iris Anshel, Michael Anshel, and Dorian Goldfeld.
\newblock Non-abelian key agreement protocols.
\newblock {\em Discrete Applied Mathematics}, 130(1):3--12, 2003.

\bibitem{bartholdi:fr}
Laurent Bartholdi.
\newblock {\em {FR -- \verb+GAP+ package ``Computations with functionally
  recursive groups'', Version 2.4.6}}, 2018.
\newblock (available at \emph{http://www.gap-system.org/Packages/fr.html}).

\bibitem{bar_gs:branch}
Laurent Bartholdi, Rostislav~I. Grigorchuk, and Zoran {\v{S}}uni{\'k}.
\newblock Branch groups.
\newblock In {\em Handbook of algebra, Vol. 3}, pages 989--1112. North-Holland,
  Amsterdam, 2003.

\bibitem{baumslag:topics93}
Gilbert Baumslag.
\newblock {\em Topics in combinatorial group theory}.
\newblock Lectures in Mathematics ETH Z\"urich. Birkh\"auser Verlag, Basel,
  1993.

\bibitem{bondarenko_n:post-critically_finite_self-similar_groups03}
E.~Bondarenko and V.~Nekrashevych.
\newblock Post-critically finite self-similar groups.
\newblock {\em Algebra Discrete Math.}, (4):21--32, 2003.

\bibitem{fernandez2014ggs}
Gustavo Fern{\'a}ndez-Alcober and Amaia Zugadi-Reizabal.
\newblock {GGS}-groups: order of congruence quotients and hausdorff dimension.
\newblock {\em Transactions of the American Mathematical Society},
  366(4):1993--2017, 2014.

\bibitem{grigorch:burnside}
R.~I. Grigorchuk.
\newblock On {B}urnside's problem on periodic groups.
\newblock {\em Funktsional. Anal. i Prilozhen.}, 14(1):53--54, 1980.

\bibitem{grigorchuk2023liftable}
Rostislav Grigorchuk and Dmytro Savchuk.
\newblock Liftable self-similar groups and scale groups.
\newblock {\em arXiv preprint arXiv:2312.05427}, 2023.

\bibitem{grigorchuk1980burnside}
Rostislav~Ivanovich Grigorchuk.
\newblock Burnside problem on periodic groups.
\newblock {\em Funktsional'nyi Analiz i ego Prilozheniya}, 14(1):53--54, 1980.

\bibitem{gupta_s:burnside}
Narain Gupta and Said Sidki.
\newblock On the {B}urnside problem for periodic groups.
\newblock {\em Math. Z.}, 182(3):385--388, 1983.

\bibitem{hughes2003length}
James Hughes and Allen Tannenbaum.
\newblock Length-based attacks for certain group based encryption rewriting
  systems.
\newblock {\em arXiv preprint cs/0306032}, 2003.

\bibitem{kahrobaei2024contracting}
Delaram Kahrobaei, Arsalan~Akram Malik, and Dmytro Savchuk.
\newblock Contracting self-similar groups in group-based cryptography.
\newblock {\em arXiv preprint arXiv:2408.14355}, 2024.

\bibitem{klopsch2018maximal}
Benjamin Klopsch and Anitha Thillaisundaram.
\newblock Maximal subgroups and irreducible representations of generalized
  multi-edge spinal groups.
\newblock {\em Proceedings of the Edinburgh Mathematical Society},
  61(3):673--703, 2018.

\bibitem{maini2024multi}
Elena Maini.
\newblock Multi-{EGS}-groups: exponent of congruence quotients.
\newblock {\em arXiv preprint arXiv:2403.13715}, 2024.

\bibitem{code:multi_EGS_github}
A.~Malik and D.~Savchuk.
\newblock Multi-{EGS}.
\newblock \url{https://github.com/Arsalan-A-M/Multi-EGS}, December 2024.

\bibitem{muntyan_s:automgrp}
Y.~Muntyan and D.~Savchuk.
\newblock {\em {AutomGrp -- \verb+GAP+ package for computations in self-similar
  groups and semigroups, Version 1.3.2}}, 2019.
\newblock {A}ccepted \verb+GAP+ package (available at
  \emph{http://www.gap-system.org/Packages/automgrp.html}).

\bibitem{myasnikov2008random}
Alexei~G Myasnikov and Alexander Ushakov.
\newblock Random subgroups and analysis of the length-based and quotient
  attacks.
\newblock {\em Journal of Mathematical Cryptology}, 2(1):29--61, 2008.

\bibitem{nekrash:self-similar}
Volodymyr Nekrashevych.
\newblock {\em Self-similar groups}, volume 117 of {\em Mathematical Surveys
  and Monographs}.
\newblock American Mathematical Society, Providence, RI, 2005.

\bibitem{nekrashevych:contracting_no_free}
Volodymyr Nekrashevych.
\newblock Free subgroups in groups acting on rooted trees.
\newblock {\em Groups Geom. Dyn.}, 4(4):847--862, 2010.

\bibitem{pervova:profinite07}
Ekaterina Pervova.
\newblock Profinite completions of some groups acting on trees.
\newblock {\em J. Algebra}, 310(2):858--879, 2007.

\bibitem{savchuk:wp}
Dmytro~M. Savchuk.
\newblock On word problem in contracting automorphism groups of rooted trees.
\newblock {\em V\=\i sn. Ki\"\i v. Un\=\i v. Ser. F\=\i z.-Mat. Nauki},
  (1):51--56, 2003.

\bibitem{sidki:nofree}
Said Sidki.
\newblock Finite automata of polynomial growth do not generate a free group.
\newblock {\em Geom. Dedicata}, 108:193--204, 2004.

\bibitem{thillaisundaram2021profinite}
Anitha Thillaisundaram and Jone Uria-Albizuri.
\newblock The profinite completion of multi-{EGS} groups.
\newblock {\em Journal of Group Theory}, 24(2):321--357, 2021.

\bibitem{vovkivsky:infinite_torsion_groups00}
Taras Vovkivsky.
\newblock Infinite torsion groups arising as generalizations of the second
  {G}rigorchuk group.
\newblock In {\em Algebra ({M}oscow, 1998)}, pages 357--377. de Gruyter,
  Berlin, 2000.

\bibitem{willis2020scale}
George~A Willis.
\newblock Scale groups.
\newblock {\em arXiv preprint arXiv:2008.05220}, 2020.

\end{thebibliography}

\end{document}